\documentclass[12pt]{amsart}
\usepackage[margin=1in]{geometry}
\usepackage{newtxtext}
\usepackage{newtxmath}
\usepackage[alphabetic]{amsrefs}
\usepackage{graphicx}
\usepackage{caption}
\usepackage{tikz}
	\usetikzlibrary{decorations.pathreplacing}
	\usetikzlibrary{decorations.pathmorphing}
	\usetikzlibrary{patterns}
\usepackage{setspace}
\usepackage[hidelinks]{hyperref}

\onehalfspacing
\setlength{\tabcolsep}{0.75em}

\newcommand{\cd}{\cdot}
\newcommand{\ra}{\rightarrow}
\newcommand{\pr}{\prime}
\newcommand{\de}{\partial}

\newcommand{\abs}[1]{\left\lvert #1 \right\rvert}

\newtheorem{theorem}{Theorem}

\theoremstyle{definition}

\theoremstyle{remark}

\begin{document}

\title{How Many Links Fit in a Box?}
\date{\today\vspace{0.5\baselineskip}}

\author{Michael H. Freedman}
\address{\hspace{-\parindent}Michael H. Freedman}
\email{freedmanm@google.com, mfreedman@cmsa.fas.harvard.edu}

\begin{abstract}
	In an earlier note \cite{freedman24} it was shown that there is an upper bound to the number of disjoint Hopf links (and certain related links) that can be embedded in the unit cube where there is a fixed separation required between the components within each copy of the Hopf link. The arguments relied on multi-linear properties of linking number and certain other link invariants. Here we produce a very similar upper bound for all non-trivial links by a more-general, entirely geometric, argument (but one which, unlike the original, has no analog in higher dimensions). Shortly after the initial paper, \cite{mp24} proved lower bounds which still provide a converse to our Theorem 1 in the case that only a bounded number of link types appear among the set $\{L_i\}$ as $N$ increases.
\end{abstract}

\maketitle

\section{Embedding Links}

We are concerned with smooth links embedded inside the unit cube $I^3$. For the purposes of this note, a \emph{true link} is a link of two or more components which is not split into pieces by any separating 2-sphere. In contrast, a link is called split if there is an embedded 2-sphere which separates some component(s) from others. We say a true link is $\epsilon$-embedded  if no two distinct components approach within distance $\epsilon$. The disjoint union of true links is the link type obtained by placing each  true link within its own topologically separating 2-sphere.

\begin{theorem}
	Suppose $L$ is the disjoint union $L = \sqcup_{i=1}^N L_i$, where each $L_i$ is a true link, and that $L$ embeds in the unit cube $I^3$ so that the embedding restricted to each $L_i$ is an $\epsilon$-imbedding, then there is a constant $\gamma > 0$ (independent of $L$) so that $N < e^{\gamma \epsilon^{-3}}$.
\end{theorem}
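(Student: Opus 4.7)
My plan is to reduce the counting of true sub-links to a volume packing problem in $I^3$, combining the topological separation provided by the disjoint-union structure with the geometric $\epsilon$-embedding hypothesis.

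First I would use topology to extract disjoint 3-balls. Since $L = \sqcup_{i=1}^N L_i$ is a disjoint union of true links, there exist pairwise disjoint embedded 2-spheres $S_i \subset I^3 \setminus L$, each enclosing exactly $L_i$; an innermost-sphere argument allows the bounded 3-balls $B_i$ to be taken pairwise disjoint, with $L_i \subset B_i \subset I^3$. Next I would derive a quantitative geometric lower bound on each $L_i$: since $L_i$ is non-split and its distinct components are mutually at distance $\geq \epsilon$, a classical ropelength-type estimate forces each component of $L_i$ to have length at least $2\pi \epsilon$, whence the $\epsilon/3$-tubular neighborhood $T_i$ of $L_i$ is embedded and has volume at least $c \epsilon^3$ for some absolute $c > 0$.

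The central obstacle is that the $T_i$ need not be pairwise disjoint, since the $\epsilon$-separation hypothesis only controls distances within each $L_i$, and components of different sub-links may come arbitrarily close. The plan is to use the topological balls $B_i$ to insulate each $L_i$: by a geometric rearrangement — pushing each separating sphere $S_i$ outward into the complement of $L_i$ while remaining disjoint from other sub-links — one obtains modified pairwise disjoint regions $\tilde B_i$ such that a definite fraction of $T_i$ lies inside $\tilde B_i$. Summing disjoint volumes then gives $N \cdot c' \epsilon^3 \leq \mathrm{vol}(I^3) = 1$, so $N \leq 1/(c' \epsilon^3)$, which is well within the exponential bound $N < e^{\alpha \epsilon^{-3}}$ for $\alpha$ sufficiently large.

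The hardest step is this insulation: other sub-links may crowd close to $L_i$, forcing $S_i$ to hug $L_i$ tightly with little room to push outward. The geometric heart of the argument lies in handling this crowding, likely via a careful iterative, averaging, or covering argument — and this is presumably where the proof's entirely geometric novelty (as contrasted with the linking-number methods of the earlier paper) resides.
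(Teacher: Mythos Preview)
Your plan has a genuine gap at precisely the step you flag as hardest, and that gap cannot be closed: the polynomial bound $N = O(\epsilon^{-3})$ you are heading toward is \emph{false}. As the abstract records, the exponential upper bound proved in the earlier note for Hopf links was subsequently shown to be, effectively, a lower bound as well; one can pack on the order of $e^{c\epsilon^{-3}}$ pairwise split, $\epsilon$-embedded Hopf links into $I^3$. Since Hopf links are true links, any valid proof of the present theorem must give a bound no better than exponential in $\epsilon^{-3}$. Consequently no volume-packing argument of your form can work: in the extremal examples the many $L_i$ really do crowd into essentially the same $\epsilon$-thickened region of the cube, and there is no way to assign to each $L_i$ a region $\tilde B_i$, disjoint from the others, that captures a fixed positive fraction of its $\epsilon/3$-tube. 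Your ``insulation'' step is not merely delicate---it is impossible.

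The paper's proof is built to tolerate exactly this overlap, and is nothing like a volume argument. Fix a bounded-geometry dual cell decomposition of $I^3$ with $V = O(\epsilon^{-3})$ cells of diameter $< \epsilon/2$, and record for each $L_i$ the $2$-coloring of the cells it meets; there are at most $2^{O(\epsilon^{-3})}$ colorings, so once $N$ exceeds the stated exponential bound, pigeonhole produces more than $v\cdot V$ sub-links (where $v$ bounds the valence) sharing a common ``black'' region $B$. The $\epsilon$-separation forces distinct components of each such $L_i$ into distinct components of $B$. Now play a system of $N-1$ splitting spheres against $\partial B$: remove inessential intersection circles by isotopy and compress $B$ along the essential ones. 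Grushko's theorem bounds the total number of compressions by the free rank of $\pi_1(B)\le v\cdot V$, so the compressed region $B''$ has fewer than $p + v\cdot V$ components and is disjoint from the isotoped sphere system $S''$. But components of distinct sub-links from our large pigeonhole family, being separated by $S''$, would have to occupy distinct components of $B''$---too many to fit, giving the contradiction. The exponential shape of the bound thus arises inevitably from counting $2$-colorings of an $O(\epsilon^{-3})$-cell grid, not from any disjointness of volume.
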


In particular, there is an upper bound to how many distinct true links can be placed in a box maintaining a fixed distance between the components of each of the true sublinks. I thank Fedya Manin for comments on an earlier draft.

\begin{proof}
	Give $I^3$ a fine triangulation of bounded geometry, i.e. $\text{max valence}:= v = O(1)$,\footnote{J.H.C.\ Whitehead \cite{whitehead40} introduced a method for constructing fine triangulations of bounded valence on any smooth manifold.} and whose number of vertices $V = O(\epsilon^{-3})$, so that all the dual cells all have diameter $< \frac{\epsilon}{2}$. We assume the link $L$ is in generic position w.r.t.\ this cell structure. For each $i$, $1 \leq i \leq N$, define a two-coloring $c_i$ of the $V$ dual cells using colors: black, and white. To form the coloring $c_i$, color every dual cell black iff it meets a component of $L_i$; color the remaining cells white. Define the codimension zero submanifold $B_i$, $1 \leq i \leq N$, to be the union of all black cells in the $c_i$ coloring. $B_i$ is a neighborhood of $L_i$ with $\pi_0(L_i) \ra \pi_0(B_i)$ an isomorphism, but not, in general, a tubular neighborhood.

	The number of possible colorings of this type is no more than $2^{O(\epsilon^{-3})}$, which we denote by $n$. This will be important since if the number $N$ of true links making up $L$ exceeds this quantity, then by the pigeonhole principle, for some $i \neq j$, $c_i$ and $c_j$ will be identical. In fact, if we assume for a contradiction that $N \geq e^{\gamma\epsilon^{-3}}$, then for large enough $\gamma$ we can ensure that there are at least $x > \mathrm{const.\,} V$ identical colorings $c_{i_1}, \dots ,c_{i_x}$ in our list of $N$ 2-colorings, for any positive constant const. We suppose this is the case for a sufficiently large const., and call their common black region B. The $x$ distinct true links, which, by re-ordering, we take to be $\{L_1,\dots,L_x\}$, whose neighborhood is $B$ must have the same number of components, call it $p$, which is also the number of components of $B$. Now consider how the splitting spheres for $L$, collectively called $S$, pass through this submanifold $B$ of $I^3$.

	We assume $S$ to be transverse to $Y := \de B$. We now use standard 3D techniques to modify $B$ to $B^\pr$ retaining $\{L_1,\dots,L_x\} \subset B^\pr$ and achieving $B^\pr \cap S = \varnothing$.

	Let us look first at a scc $\alpha$ in $S \cap Y$ which is innermost in $S$. Call $F$ an innermost disk of $S$ that $\alpha$ bounds. There are four cases: $\alpha$ may be trivial or nontrivial in $Y$ and $\operatorname{int}(F)$ may lie in $B$ or its complement $W$. Suppose $\alpha$ is trivial in $Y$ as witnessed by a disk $E \subset Y$ with boundary also $\alpha$. This $E \cup F$ bounds a 3-ball $C$ which is either in $B$, $C \subset B$, or oppositely $\operatorname{int}(C) \subset W$. In the first case, we modify $B$ by an ambient isotopy which subtracts $C$ from $B$, in the latter case we modify $B$ by adding $C$ to $B$; in both cases, call the result $B^\pr$. In the first case, no component of any of the links $\{L_1,\dots,L_x\}$ can lie in $C$, nor meet $\de C$, by the ``true link'' assumption and the isomorphism (actually, just the injectivity) on $\pi_0$. If one component of some $L_i$, $1 \leq i \leq x$, lay in $C$, then all of $L_i$ must, contradicting injectiity on $\pi_0$. So, an ambient isotopy supported near $B^\pr$ carries $B^\pr$ onto $B$ and fixes $\{L_1,\dots,L_x\}$. In the second case, since all $L_i$, $1 \leq i \leq x$ have neighborhoods equal to $B$, they must be disjoint from $C$. We abuse notation slightly by referring to all future modifications of $B$ as $B^\pr$.

	Now consider the case $\alpha$ is non-trivial in $Y$. If $F \subset B$, compress $Y$ along $F$ to delete an essential 2-handle from $B$. If, on the other hand, $\operatorname{int}(F) \subset W$, then add a 2-handle with core $F$ to $B$. In either case, the result $Y^\pr := \de B^\pr$ has had its complexity reduced. To measure this complexity, define a norm on closed oriented surfaces similar to the Gromov-Thurston norm as $\lVert Y \rVert = \sum (\abs{\chi(Y_k)} + 1)$, where the sum is taken over all components $Y_k$ of $Y$ of positive genus (exclude 2-spheres), and $\chi$ is Euler characteristic. Evidently compression (or surgery) on essential scc strictly reduces this norm. Now proceed to remove $S$-innermost scc of intersection with $Y$ either by isotopy when the scc are trivial in $Y$ or by compression/surgery when the scc are non-trivial. This process retains the injectivity $\pi_0(L_i) \ra \pi_0(B^\pr)$, $1 \leq i \leq x$, but not surjectivity. It is easy to give an upper bound $O(v \cd V)$ on the complexity of the initial surface $Y$, since the components of $B$ are obtained collectively by gluing up at most $V$ 3-cells along at most $v$ faces per gluing. The valence $v$ can be taken (using Whithead triangulations) to be a constant independent of $\epsilon$, so we may simply write the upper bound as $O(V)$. The handle addition/removal steps change the topology of $B$ and reduce the complexity, $\lVert Y^\pr \rVert < \lVert Y \rVert$, so there can only be $O(V)$ such steps. Some of these steps (the ones involving a compression of a homologically trivial $\alpha$) increase the number of components of B by one. Since originally $B$ has at most $O(V)$ components and only $O(V)$ steps add a component, then by the time we finish all steps, i.e.\ have modified $B$ to $B^\pr$ with $B^\pr \cap S = \varnothing$, $B^\pr$ will still only have $O(V)$ components.

	But, crucially, during each step, injectivity of $\pi_0(L_i) \ra \pi_0(B^\pr)$, $1 \leq i\leq x$, has been preserved. But this yields a contradiction if $\gamma$ is large enough. For distinct $i$ and $i^\pr$, $1 \leq i \leq i^\pr \leq x$, the two sublinks  $L_i$ and $L_{i^\pr}$ must have components in the same component of $B^\pr$, and these link components can be joined by an arc in $B^\pr$ which will not encounter $S$, contradicting the assumption that the two sublinks are separated by spheres of $S$.
\end{proof}

\section{Embedding Knots}

The same coloring method answers an analogous but simpler question about packing knots.

\begin{theorem}
	There is a constant $\delta > 0$ so that if a smooth ($C^2$) link $L$ in $I^3$ is a disjoint union of $N$ nontrivial knots $K_1, \dots ,K_N$, and each of these knots has an embedded normal bundle of radius $\epsilon > 0$. Then $N < e^{\delta\epsilon^{-3}}$.
\end{theorem}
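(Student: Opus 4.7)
The plan is to mimic the coloring-plus-compression argument of Theorem 1, with the simplification that each component $L_i = K_i$ is now a single circle. I take a triangulation of $I^3$ of bounded maximum valence $v = O(1)$ with $V = O(\epsilon^{-3})$ vertices whose dual cells all have diameter $< \epsilon/2$. For each $K_i$, color a dual cell black iff it meets $K_i$, and let $B_i$ denote the resulting black region. Choosing $\gamma$ large, if $N \geq e^{\gamma \epsilon^{-3}}$ then pigeonhole produces at least $x > v \cd V + 1$ knots $K_{i_1}, \dots, K_{i_x}$ sharing a common coloring $c$ with a single black region $B$.

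The $\epsilon$-normal-bundle hypothesis enters as follows: every dual cell of $B$ lies within $\epsilon/2 < \epsilon$ of $K_{i_j}$, so $B$ is contained in the embedded solid-torus tubular neighborhood $T_{i_j}$ of each $K_{i_j}$. Moreover $B$ is connected, since $K_{i_1}$ in generic position enters its dual cells sequentially and the resulting union is a connected ``tube''; this tube contains each of the $x$ knots.

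Now apply the innermost-circles / Grushko surgery from Theorem 1. Take $N - 1$ separating spheres $S$ for $L = \sqcup K_i$, put them transverse to $Y = \de B$, and iterate: innermost-disk isotopies of $S$ supported in $I^3 \setminus L$, alternated with compressions of $B$ along essential disks $F$ chosen disjoint from $L$ by codimension-$2$ general position. This terminates at $B^{\pr\pr} \subset B$ and $S^{\pr\pr}$ with $B^{\pr\pr} \cap S^{\pr\pr} = \varnothing$ and each $K_{i_j}$ still inside $B^{\pr\pr}$. Grushko's theorem bounds the number of compressions by $2g \leq v \cd V$, so $B^{\pr\pr}$ has at most $1 + v \cd V$ connected components.

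Since $x > v \cd V + 1$, two of the knots $K_{i_p}$ and $K_{i_q}$ must land in the same component of $B^{\pr\pr}$; joining them there by an arc gives a path disjoint from $S^{\pr\pr}$. This contradicts the fact that $S^{\pr\pr}$, being isotopic to $S$ in $I^3 \setminus L$, still separates the two knots. The step I expect to require the most care is keeping every innermost-disk isotopy and every compressing disk disjoint from the $1$-manifold $L$ throughout the back-and-forth procedure, so that no knot is cut or pushed out of $B^{\pr\pr}$; codimension-$2$ general position makes this possible but has to be threaded consistently through the induction.
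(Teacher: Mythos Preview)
Your plan reruns the full sphere/compression machinery of Theorem~1, but the paper's proof of Theorem~2 is much shorter and takes a genuinely different route. After forming the $B_i$ exactly as you do, the paper observes that the $\epsilon$--normal--bundle hypothesis forces each $B_i$ to actually \emph{be} a tubular neighborhood of $K_i$ (not merely to sit inside one). Ordinary pigeonhole then produces just two indices $i\neq j$ with $B_i=B_j$. Uniqueness of tubular neighborhoods now says $K_i$ and $K_j$ have the same knot type and each is a framed normal push--off of the other; for a \emph{nontrivial} knot, a knot together with a push--off is never a split link, contradicting the hypothesis that $L$ is split. No separating spheres, no compressions, no Grushko.

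Your argument, as written, has a genuine gap, and there is a quick way to see one must exist: you never invoke the nontriviality of the $K_i$, yet the statement is false for unknots. Take $N$ coaxial round circles of radius $2\epsilon$ stacked at spacing $1/(2N)$; each has an embedded normal $\epsilon$--tube, the link is split, and $N$ is unbounded. Tracing your proof in this example pinpoints the failure, and it is exactly the step you flagged. When $B$ is a solid torus with every $K_{i_j}$ a core, every compressing disk for $B$ meets each $K_{i_j}$ with algebraic intersection number one; ``codimension--$2$ general position'' cannot remove an intersection forced by linking number. Equivalently, the innermost disk on $S^{\prime}$ bounded by an essential curve of $S^{\prime}\cap Y$ lies \emph{outside} $B$, not inside, so it does not compress $B$ in the sense your induction needs. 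The paper's tubular--neighborhood/push--off argument is precisely the device that turns ``$B_i=B_j$'' directly into a non--split two--component sublink, and it is where the nontriviality hypothesis is actually spent.
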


\begin{proof}
	As before form the neighborhoods $B_1, \dots, B_N$ of the knots constituting $L$ by taking the union of appropriate cells (all of diameter $< \frac{\epsilon}{2}$) that the knots meet (transversely). For this application, it suffices to choose the cells to be that of a standard cubulation of $I^3$ shifted slightly to ensure it is in general position with $L$. Each $B_i$ is topologically a tubular neighborhood of its corresponding knot. For N larger than the stated estimate, the previous coloring argument shows that for some $i$ and $j$, $1 \leq i < j \leq N$, $B_i = B_j$. Uniqueness of tubular neighborhoods implies the $i$th and $j$th knots are of the same topological type, and that, as a two component link, either knot must be a framed normal push-off of the other. This is incompatible with the disjoint union (i.e.\ split) property of the link $L$ unless the two knots are actually unknots---which we have assumed not to be the case.
\end{proof}

\bibliography{references}

\end{document}